\newtheorem{theorem}{Theorem}
\newtheorem{lemma}[theorem]{Lemma}
\newtheorem{prop}[theorem]{Proposition}
\newtheorem{cor}[theorem]{Corollary}
\newtheorem{rem}{Remark}
\numberwithin{equation}{section}
\numberwithin{theorem}{section}
\def \RR {\mathbb{R}}
\def \NN {\mathbb{N}}
\def\({\left(}
\def\){\right)}
\def\ls{\lesssim}
\begin{document}
\title[Solution of critical fractal Burgers equation]{Stable estimates for source solution of critical fractal Burgers equation}

\author{Tomasz Jakubowski}
\address[Tomasz Jakubowski]{Wroclaw University of Technology, ul. Wybrze\.ze Wyspia\'nskiego 27, Wroclaw, Poland}
\email{tomasz.jakubowski@pwr.edu.pl}

\author{Grzegorz Serafin}
\address[Grzegorz Serafin]{Wroclaw University of Technology, ul. Wybrze\.ze Wyspia\'nskiego 27, Wroclaw, Poland}
\email{grzegorz.serafin@pwr.edu.pl}
%\thanks{The paper is partially supported by grant MNiSW IP2012 018472}
\begin{abstract}
In this paper, we provide two-sided estimates for  the source solution of $d$-dimensional critical fractal Burgers equation $u_t-\Delta^{\alpha/2}+b\cdot \nabla\(u|u|^q\)=0$, $q=(\alpha-1)/d$, $\alpha\in(1,2)$, $b\in\RR^d$, by the density function of the  isotropic $\alpha$-stable process.
\end{abstract}
\keywords{fractional Laplacian, critical Burgers equation, source solution\\}

\maketitle

\section{Introduction}

Let $d \in \NN$ and $\alpha \in (1,2)$. We consider the following pseudo-differential equation
\begin{equation}\label{eq:problem}
\left\{\begin{array}{l}u_t-\Delta^{\alpha/2}u+b\cdot \nabla\(u|u|^q\)=0, \qquad t>0, \; x \in \RR^d,\\
u(0,x)=M\delta_0(x),\end{array}\right.
\end{equation}
where $M>0$ is arbitrary constant and $b \in \RR^d$ is a constant vector. In this paper, we focus on the critical case $q=(\alpha-1)/d$. Here, $\Delta^{\alpha/2}$ denotes the fractional Laplacian defined by the Fourier transform
$$
\widehat{\Delta^{\alpha/2} \phi}(\xi)=-|\xi|^{\alpha}\widehat\phi(\xi),\ \ \ \ \phi\in C_c^\infty(\RR^d).
$$
Equation (\ref{eq:problem}) for various values of $q$ and initial conditions $u_0$ was recently intensely studied (\cite{BFW, BKW2001, BKW2001Studia, BK2006}). For $d=1$, the case $q=2$ is of particular interest (see e.g. \cite{KMX, AIK, KNS, WW}) because it is a natural counterpart of the classical Burgers equation. Another interesting value of $q$ is $\frac{\alpha-1}{d}$. 
In \cite{BKW2001} authors proved that the solution of  (\ref{eq:problem}), which we  denote throughout the paper  by $u_M(t,x)$, exists and is unique and positive. It belongs also to $L^p\(\RR^d\)$ for every $p\in[1,\infty]$. The exponent $q=\frac{\alpha-1}{d}$ is critical in some sense. It is the only value for which the function $u_M(t,x)$ is self-similar. Is satisfies the following scaling condition (\cite{BKW2001})
\begin{equation}
\label{eq:scaling}
u_M(t,x)=a^{d}u_M(a^{\alpha}t,ax), \hspace{15mm}\text{for all }a>0.
\end{equation}
Furthermore, the linear and the nonlinear terms in (\ref{eq:problem}) have equivalent influence on the asymptotic behavior  of the solution. If  $q>(\alpha-1)/d$, the operator $\Delta^{\alpha/2}$ plays the main role. More precisely, for such $q$ and  a function  $u$ satisfying   (\ref{eq:problem}),  with not necessarily the same initial condition, we have 
$$\lim_{t\rightarrow\infty}t^{n(1-1/p)/\alpha}\left\|u(t,\cdot)-e^{\Delta^{\alpha/2}}u(0,\cdot)\right\|_p=0,\ \ \ \ \text {for each } p\in[1,\infty].$$   
For $q<(\alpha-1)/d$ another asymptotic behavior is expected. In addition, taking $q=\frac{\alpha-1}{d}$ for $d=1$ and $\alpha=2$ we obtain the classical case, which makes the equation (\ref{eq:problem}) with critical exponent $q$ one of the natural generalizations of the Burgers equation.

Till the end of the paper we assume that $d \ge 1$, $\alpha \in (1,2)$ and $q=\frac{\alpha-1}{d}$. Let $p(t,x)$ be the fundamental solution of
\begin{equation}\label{eq:fundamental}
v_t=\Delta^{\alpha/2} v.
\end{equation}
In \cite{BK2006} the authors proved that for sufficiently small $M$ there is a constant $C=C(d,\alpha,M,b)$ such that
\begin{equation}\label{eq:mainresult}
u_M(t,x)\le Cp(t,x), \qquad t>0,\; x\in \RR^d.
\end{equation}
In this paper we get rid of the smallness assumption of $M$. Furthermore, we also obtain the lower bounds of $u_M$. We propose a new method which allows us to show pointwise estimates of solutions to the nonlinear problem (\ref{eq:problem}) without the smallness assumption imposed on $M$. This method has been inspired by the proof of \cite[Theorem 1]{BJ2}. Our main result is

\begin{theorem}\label{mainthm}
Let $d \ge 1$ and $\alpha \in (1,2)$. Let $u_M(t,x)$ be the solution of the equation (\ref{eq:problem}) with $q = \frac{\alpha-1}{d}$. There exists a constant $C=C(d,\alpha,M,b)$ such that
$$
C^{-1} p(t,x) \le u_M(t,x) \le C p(t,x), \qquad t>0,\; x \in \RR^d.
$$
\end{theorem}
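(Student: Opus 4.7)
My plan is to work with the mild (Duhamel) formulation
\begin{equation}\label{duhamel}
u_M(t,x)=Mp(t,x)-\int_0^t\!\!\int_{\RR^d}\nabla p(t-s,x-y)\cdot b\,u_M^{1+q}(s,y)\,dy\,ds,
\end{equation}
combined with the self-similarity (\ref{eq:scaling}). Because $u_M(t,x)/p(t,x)$ depends on $(t,x)$ only through $x/t^{1/\alpha}$, the claimed two-sided estimate reduces to showing $c\le u_M(1,x)/p(1,x)\le C$ for all $x\in\RR^d$. I would first establish the upper bound via a continuation argument in the mass $M$, using the smallness case of \cite{BK2006} as a seed; the matching lower bound then follows by plugging the upper bound back into (\ref{duhamel}).

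\textbf{Key nonlinear estimate.} The central technical input is the scale-invariant convolution bound
\begin{equation}\label{key}
\int_0^t\!\!\int_{\RR^d}|\nabla p(t-s,x-y)|\,p(s,y)^{1+q}\,dy\,ds\;\le\;K\,p(t,x),
\end{equation}
with $K=K(d,\alpha)$. The exponent $q=(\alpha-1)/d$ is precisely the value for which both sides scale identically as $p(t,x)$, which is the analytic reason for criticality. I would derive (\ref{key}) from the sharp two-sided bound $p(t,x)\asymp t(t^{1/\alpha}+|x|)^{-(d+\alpha)}$ together with its matching gradient estimate, by splitting the time integral at $t/2$ and the spatial integral into $|y|\le |x|/2$ and $|y|>|x|/2$, and applying a three-kernel (3P-type) manipulation in each piece. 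Given (\ref{key}), inserting the a priori ansatz $u_M(s,y)\le h\,p(s,y)$ with $h:=\sup_{s,y} u_M/p$ into (\ref{duhamel}) yields the algebraic constraint
$$h\;\le\;M+|b|\,K\,h^{1+q}.$$

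\textbf{Closing the upper bound.} Since $1+q>1$, this algebraic constraint alone does not bound $h$ uniformly in $M$, but it partitions the nonnegative axis into an admissible set and a forbidden gap. The smallness result of \cite{BK2006} places $h(M)$ in the lower admissible interval for $M$ small. To reach arbitrary $M$, I would run a continuity argument: show that $M\mapsto u_M$ is continuous into the weighted space $\{u:\sup u/p<\infty\}$ via well-posedness of (\ref{eq:problem}) in this norm, so that $M\mapsto h(M)$ is continuous and cannot jump across the forbidden gap as $M$ increases. I expect this continuity-in-$M$ step---the transplant of the method of \cite[Thm.~1]{BJ2} to the critical Burgers setting---to be the main technical obstacle, since it demands quantitative stability of mild solutions uniform on compact intervals of $M$.

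\textbf{Lower bound.} With the upper bound $u_M\le Cp$ in hand, (\ref{key}) gives $|u_M-Mp|\le |b|\,K\,C^{1+q}\,p$ via (\ref{duhamel}), and hence $u_M\ge(M-|b|\,K\,C^{1+q})p$; this delivers the lower bound directly for $M$ sufficiently large, while for small $M$ the estimate of \cite{BK2006} already gives $u_M\sim Mp$. For the remaining range of $M$ I would view $u_M$ as solving the linear equation $u_t=\Delta^{\alpha/2}u-(1+q)u^q b\cdot\nabla u$, note that $u_M\le Cp$ makes the drift coefficient $(1+q)u^q b$ Kato-class in the sense of \cite{BJ2}, and invoke the pointwise comparison of fundamental solutions there to conclude $u_M\ge c\,p$.
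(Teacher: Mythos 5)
Your key convolution estimate is correct: by the scaling argument and $|\nabla p(t-s,x-y)|\ls (t-s)^{-1/\alpha}p(t-s,x-y)$, together with $p(1,w)^{q}\le \|p(1,\cdot)\|_\infty^q$, it reduces exactly to Lemma \ref{lem:intHp} of the paper. The decisive gap is in the continuation step. Write $h:=\sup u_M/p$ and suppose you have shown $h\le M+|b|Kh^{1+q}$. For fixed $M$ this inequality \emph{fails} on an interval $(h_-,h_+)$ around $h_*=((1+q)|b|K)^{-1/q}$ \emph{only if} $M< h_*q/(1+q)$; for $M\ge h_*q/(1+q)$ the map $h\mapsto M+|b|Kh^{1+q}-h$ is everywhere nonnegative, so the inequality holds for every $h\ge 0$, there is no forbidden gap, and continuity of $M\mapsto h(M)$ gives no information. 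Since the whole point of Theorem \ref{mainthm} is to treat arbitrary $M$, the continuity-in-$M$ argument collapses exactly where it is needed. A secondary problem is circularity in your lower bound: both the estimate $|u_M-Mp|\le |b|KC^{1+q}p$ and the Kato-class reading of the drift $(1+q)u_M^q\,b$ presuppose the global upper bound $u_M\le Cp$, which is the missing ingredient.

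The paper's mechanism is genuinely different and is worth contrasting. Instead of trying to make the whole nonlinearity a contraction, it first proves (Proposition \ref{lem:limit u}) that $u_M(1,x)\to 0$ as $|x|\to\infty$, then splits the Duhamel integral at a radius $R$ chosen so that $u_M(1,w)^q<\eta/C_0$ for $|w|>R$. The near part ($|w|<R$) is collected into a \emph{fixed} function $\tilde h_R(x)$ that does not feed back into the iteration and is shown directly to be $\ls p(1,x)$ for $|x|>2R$; the far part ($|w|>R$) is where the smallness of $u_M^q$, not of $M$, supplies the geometric factor $\eta<1$. Iterating then yields $u_M(1,x)\le \frac{M}{1-\eta}p(1,x)+\frac{C_1}{1-\eta}\tilde h_R(x)$ for every $M$. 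This decay-at-infinity localisation is the idea you are missing: without it, or some comparable mechanism that defeats the $h^{1+q}$ term for large $M$, the argument cannot go beyond the smallness regime of \cite{BK2006}.
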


The fractional Laplacian plays also a very important role in the probability theory as a generator of the so called isotropic stable process. The theory of its linear perturbations has been recently significantly developed, see e.g., \cite{BJ1, BJ2, 2009-TJ-KS-jee, JS2, Sz, MM1, MM2, CKS2012, GR}. However,  since the term $b\cdot\nabla (|u|^q u)$ in (\ref{eq:problem}) represents a nonlinear drift, methods used in the linear case often cannot be  adapted. In the proofs we mostly use the Duhamel formula and its suitable iteration. The scaling condition (\ref{eq:scaling}) is also intensively exploit.

The paper is organized as follows.  In Preliminaries we collect some basic properties of the function $p(t,x)$ and introduce the Duhamel formula as well. In Section 3 we prove that the solution of (\ref{eq:problem}) converges to $0$ as $|x| \to \infty$. In section 4 we prove Theorem \ref{mainthm}.

\section{Preliminaries}

\subsection{Notation}
For two positive functions $f,g$ we denote $f\ls g$ whenever there exists a constant $c>1$ such that $f(x)<cg(x)$ for every argument $x$.  If $f\ls g$ and $g\ls f$ we write $f\approx g$. If value of a constant in estimates is relevant, we denote it by $C_k$, $k\in\mathbb N$, and it does not change throughout the paper.

\subsection{Properties of $p(t,x)$}
 The fundamental solution of (\ref{eq:fundamental}) may be given by the inverse Fourier transform 
\begin{equation*}
	p(t,x) = (2\pi)^{-d} \int_{\RR^d} e^{-i x \cdot \xi} e^{-t |\xi|^{\alpha}}d\xi, \qquad t>0\,,\; x \in \RR^d\,.
\end{equation*}
This implies the following scaling property
\begin{equation}
\label{eq:scalingp}
p(t,x)=a^{d}p(a^{\alpha}t,ax), \hspace{15mm}\text{for all }a>0.
\end{equation}
Note that $u_M(t,x)$ possesses exactly the same property. Let $p(t,x,y) := p(t,y-x)$. Below, we give two well-known estimates of $p$ and the gradient of $p$ (see \cite{BJ1} for more details).
\begin{align}\label{est:p}
p(t,x,y)&\approx \frac t{\(t^{1/\alpha}+|y-x|\)^{d+\alpha}},\\
\label{est:gradp}
|\nabla_yp(t,x,y)|&\approx \frac {t\, |y-x|}{\(t^{1/\alpha}+|y-x|\)^{d+2+\alpha}}.
\end{align}
We will need the following lemma
\begin{lemma}
\label{est:inteps} For $t, \varepsilon >0$ we have
\begin{equation*}
\int_{B(0,\varepsilon)}p(t,0,w)dw\approx\(\frac{\varepsilon}{t^{1/\alpha}+\varepsilon}\)^{d}.
\end{equation*}
\end{lemma}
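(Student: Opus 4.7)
The plan is to reduce everything to scale $t=1$ via the scaling \eqref{eq:scalingp}, then use the explicit pointwise estimate \eqref{est:p} and a two-case computation in polar coordinates.

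First, I would change variables to remove $t$. Using \eqref{eq:scalingp} with $a=t^{-1/\alpha}$, the substitution $w = t^{1/\alpha}u$ gives
\begin{equation*}
\int_{B(0,\varepsilon)} p(t,0,w)\,dw \;=\; \int_{B(0,\varepsilon/t^{1/\alpha})} p(1,0,u)\,du.
\end{equation*}
Writing $\rho := \varepsilon/t^{1/\alpha}$, and observing that $\varepsilon/(t^{1/\alpha}+\varepsilon) = \rho/(1+\rho)$, the lemma reduces to the scale-free claim
\begin{equation*}
\int_{B(0,\rho)} p(1,0,u)\,du \;\approx\; \left(\frac{\rho}{1+\rho}\right)^{d}, \qquad \rho>0.
\end{equation*}

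Next I would insert the pointwise bound \eqref{est:p}, which at $t=1$ reads $p(1,0,u)\approx (1+|u|)^{-(d+\alpha)}$, and pass to polar coordinates to obtain
\begin{equation*}
\int_{B(0,\rho)} p(1,0,u)\,du \;\approx\; \int_0^{\rho} \frac{r^{d-1}}{(1+r)^{d+\alpha}}\,dr.
\end{equation*}
Now I would split on whether $\rho\le 1$ or $\rho>1$. If $\rho\le 1$, then $(1+r)^{d+\alpha}\approx 1$ on $[0,\rho]$, so the integral is $\approx \rho^{d}$, matching $(\rho/(1+\rho))^{d}\approx \rho^{d}$. If $\rho>1$, I would split the integration at $r=1$: the piece on $[0,1]$ is a fixed positive constant, while on $[1,\rho]$ we have $r^{d-1}/(1+r)^{d+\alpha}\approx r^{-1-\alpha}$, whose integral from $1$ to $\rho$ is uniformly bounded. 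Hence the total is $\approx 1$, matching $(\rho/(1+\rho))^{d}\approx 1$.

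There is no serious obstacle here; the only mild care needed is the case split, since both sides saturate in opposite regimes ($\rho\to 0$ and $\rho\to\infty$) and the upper and lower bounds must be tracked through the radial integral in both cases. Once the cases are combined, substituting back $\rho=\varepsilon/t^{1/\alpha}$ yields the stated two-sided bound.
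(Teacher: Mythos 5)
Your proof is correct and follows essentially the same approach as the paper: apply the two-sided estimate \eqref{est:p}, pass to polar coordinates, and split into two cases according to whether $\varepsilon/t^{1/\alpha}$ is small or large. The only cosmetic difference is that you rescale to $t=1$ before going to polar coordinates, whereas the paper keeps $t$ and absorbs the scaling through a change of variable inside the radial integral; the two computations are equivalent.
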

\begin{proof} By formula (\ref{est:p}),
\begin{align}
\int_{B(0,\varepsilon)}\nonumber
p(t,0,w)dw&\approx
\int_{B(0,\varepsilon)} \frac {t\,dw}{\(t^{1/\alpha}+|w|\)^{d+\alpha}}\\\nonumber
&=c\int_0^\varepsilon \frac {t\,r^{d-1}dr}{\(t^{1/\alpha}+r\)^{d+\alpha}}\\\label{eq:int}
&=c\,\frac{t}{\varepsilon^\alpha}\int_0^1 \frac {r^{d-1}dr}{\(\frac{t^{1/\alpha}}{\varepsilon}+r\)^{d+\alpha}}.
\end{align}
If $t^{1/\alpha}\ge\varepsilon$, we estimate the denominator in the integral by $\frac{t^{1/\alpha}}{\varepsilon}$ and we get 
\begin{equation}\label{eq:t>e}
\int_{B(0,\varepsilon)}p(t,0,w)dw\approx \(\frac{\varepsilon}{t^{1/\alpha}}\)^d,\ \ \ \ t^{1/\alpha}\ge\varepsilon.
\end{equation}
In the case $t^{1/\alpha}< \varepsilon$ we substitute $r=\frac{t^{1/\alpha}}{\varepsilon} u$ in   (\ref{eq:int}), then
\begin{equation}\label{eq:t<e}
\int_{B(0,\varepsilon)}
p(t,0,w)dw\approx\int_0^{\varepsilon/t^{1/\alpha}} \frac {u^{d-1}du}{\(1+u\)^{d+\alpha}}\approx1,\ \ \ \ t^{1/\alpha}< \varepsilon.
\end{equation}
Combining (\ref{eq:t>e}) and (\ref{eq:t<e}), we obtain the assertion of the lemma. 
\end{proof}

\subsection{Duhamel formula} Our mail tool is the following Duhamel formula,
\begin{equation}
u_M(t,x) = M p(t,x) + \int_0^t \int_{\RR^d} p(t-s,x,z) b \cdot \nabla_z [u_M(s,z)]^{q+1}\,dz\,ds.
\label{eq:Duhamel1}
\end{equation}
In the following, we assume that $u_M(t,x)=t^{-d/\alpha}u_M(1,xt^{-1/\alpha})$ is a nonnegative self-similar solution of the equation (\ref{eq:Duhamel1}) such that $u_M(1,\cdot)\in L^p\left(\RR^d\right)$ for each $p\in[1,\infty]$. As it was mentioned in Introduction, the existence of such a function was shown in \cite{BKW2001}.  

It turns out that the integral in (\ref{eq:Duhamel1}) is not absolutely convergent, but integrating by parts we obtain a more convenient form
\begin{equation}
u_M(t,x) = M p(t,x) - \int_0^t \int_{\RR^d} b \cdot \nabla_z p(t-s,x,z)  [u_M(s,z)]^{q+1}\,dz\,ds,
\label{eq:Duhamel2}
\end{equation}
which is absolutely convergent. Indeed, we have $||u_M(s,\cdot)||_p \ls s^{-d(1-1/p)/\alpha}$, $p \in [1,\infty]$ (see \cite{BKW2001}). Hence, by (\ref{est:p}) and (\ref{est:gradp}), 
\begin{align*}
&\int_0^t \int_{\RR^d}\left| b \cdot \nabla_z p(t-s,x,z)  [u_M(s,z)]^{q+1}\right| \,dz\,ds \\
& \ls \int_0^{t/2} \int_{\RR^d} | \nabla_z p(t-s,x,z)|  [u_M(s,z)]^{q+1} \,dz\,ds + \int_{t/2}^t \int_{\RR^d} |\nabla_z p(t-s,x,z)|  [u_M(s,z)]^{q+1} \,dz\,ds \\
& \ls \int_0^{t/2} \int_{\RR^d} (t-s)^{-(d+1)/\alpha}  [u_M(s,z)]^{q+1} \,dz\,ds +  \int_{t/2}^t \int_{\RR^d} (t-s)^{-1/\alpha} p(t-s,x,z)  s^{-d(q+1)/\alpha} \,dz\,ds \\
& \ls t^{-(d+1)/\alpha} \int_0^{t/2} s^{-(\alpha-1)/\alpha} ds + t^{-d(q+1)/\alpha}  \int_{t/2}^t  (t-s)^{-1/\alpha} \,ds \ls t^{-d/\alpha}.
\end{align*}
Now, using the scaling property (\ref{eq:scaling}), we obtain
\begin{align*}
& \int_0^t \int_{\RR^d} b \cdot \nabla_z p(t-s,x,z)  [u_M(s,z)]^{q+1}\,dz\,ds   \\
& = \int_0^t \int_{\RR^d} \nabla_z p(t-s,x,z)  s^{-(d-1+\alpha)/\alpha}  [u_M(1,s^{-1/\alpha} z)]^{q+1}\,dz\,ds  \\
& = \int_0^t \int_{\RR^d} s^{(1-\alpha)/\alpha} \nabla_w p(t-s,x,s^{1/\alpha} w)    [u_M(1,w)]^{q+1}\,dw\,ds  \\
& =\alpha \int_0^{t^{1/\alpha}} \int_{\RR^d} \nabla_wp(t-r^\alpha,x,r w)[ u_M(1,w)]^{q+1}\,dw\,dr. 
\end{align*}
Finally, we get
\begin{equation}
u_M(t,x) = M p(t,x) - \alpha \int_0^{t^{1/\alpha}} \int_{\RR^d} \nabla_wp(t-r^\alpha,x,r w)[ u_M(1,w)]^{q+1}\,dw\,dr
\label{eq:Duhamelscale}
\end{equation}

\section{Behaviour of $u_M(1,x)$ at infinity}
Due to the  scaling property (\ref{eq:scaling}) it suffices to consider  $u_M(t,x)$ only for $t=1$.
\begin{lemma}\label{lem:DuhamelEst}
There is a constant $C_1>0$ such that for every $x \in \RR^d$,
\begin{equation}
u_M(1,x) \le M p(1,x) + C_1 \int_0^1 \int_{\RR^d} (1-r^\alpha)^{-1/\alpha} p(1-r^\alpha,x,r w)  [u_M(1,w)]^{q+1}\,dw\,dr.
\label{eq:DuhamelEst}
\end{equation}
\end{lemma}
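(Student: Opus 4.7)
The plan is to start from the scaled Duhamel identity (\ref{eq:Duhamelscale}) evaluated at $t=1$,
$$u_M(1,x) = M p(1,x) - \alpha \int_0^{1} \int_{\RR^d} b\cdot \nabla_w p(1-r^\alpha,x,r w)\,[u_M(1,w)]^{q+1}\,dw\,dr,$$
and bound the integrand pointwise. Since $u_M \ge 0$ and $|b\cdot v|\le |b|\,|v|$, it is enough to replace $-b\cdot \nabla_w p$ by $|b|\,|\nabla_w p|$, which turns the equality into the one-sided estimate we want, up to showing the correct growth factor in the kernel.

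The essential step is then the pointwise comparison
$$|\nabla_y p(t,x,y)| \ls t^{-1/\alpha}\, p(t,x,y),$$
which follows directly from the two-sided estimates (\ref{est:p}) and (\ref{est:gradp}). Indeed, taking the quotient,
$$\frac{|\nabla_y p(t,x,y)|}{p(t,x,y)} \approx \frac{|y-x|}{\bigl(t^{1/\alpha}+|y-x|\bigr)^{2}} \le \frac{1}{t^{1/\alpha}+|y-x|} \le t^{-1/\alpha}.$$
Applying this with $t$ replaced by $1-r^\alpha$ and $y$ by $rw$, we obtain
$$|\nabla_w p(1-r^\alpha,x,rw)| \ls (1-r^\alpha)^{-1/\alpha}\, p(1-r^\alpha,x,rw),$$
and absorbing $\alpha|b|$ together with the implicit constant into a single constant $C_1$ yields the claim.

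The only thing to verify is that the resulting integral on the right is well-defined, which amounts to noting that $(1-r^\alpha)^{-1/\alpha} \approx (1-r)^{-1/\alpha}$ near $r=1$ with $1/\alpha<1$, so it is integrable, while $p(1-r^\alpha,x,rw)[u_M(1,w)]^{q+1}$ is controlled in $w$ by the integrability of $u_M(1,\cdot)$ recalled in the derivation of (\ref{eq:Duhamel2}). I do not expect a genuine obstacle: the lemma is essentially a combination of the absolutely convergent scaled Duhamel formula already established in Section~2.3 with the standard pointwise bound of $|\nabla p|$ by $t^{-1/\alpha}p$.
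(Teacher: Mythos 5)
Your proof is correct and takes essentially the same approach as the paper: both start from the absolutely convergent scaled Duhamel formula (\ref{eq:Duhamelscale}), use nonnegativity of $u_M$ to pass to a one-sided estimate, and invoke the pointwise bound $|\nabla_y p(t,x,y)|\ls t^{-1/\alpha}p(t,x,y)$ that follows by dividing (\ref{est:gradp}) by (\ref{est:p}). The paper states this bound without spelling out the quotient computation, but the argument is the same.
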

\begin{proof}Formulae (\ref{est:p}) and (\ref{est:gradp}) imply that 
$$|\nabla_zp(1-r,x,z)|\ls(1-r)^{-1/\alpha}p(1-r,x,z),\ \ \ \ \ \ r\in[0,1),\ \  x,z\in\RR^d,$$
and the assertion follows by (\ref{eq:Duhamelscale}).
\end{proof}

Now, we will show that the function $u_M(1,x)$ vanishes at infinity.
\begin{prop}\label{lem:limit u}
We have $\lim\limits_{|x|\rightarrow\infty}u_M(1,x)=0$.
\end{prop}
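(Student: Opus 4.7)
The plan is to apply Lemma~\ref{lem:DuhamelEst}. Since $p(1,x)\to 0$ as $|x|\to\infty$ by (\ref{est:p}), it suffices to prove that the integral on the right-hand side vanishes at infinity. The main idea is to exploit that $u_M(1,\cdot)\in L^\infty$ in order to linearize: one factor of $u_M(1,w)$ in the exponent $q+1$ is absorbed into a constant via $[u_M(1,w)]^{q+1}\le \|u_M(1,\cdot)\|_\infty^q\, u_M(1,w)$. The resulting expression is then reshaped by the substitution $z=rw$ combined with the scaling identity $u_M(1,z/r)=r^d u_M(r^\alpha,z)$ coming from (\ref{eq:scaling}), which turns the inner integral into the $\alpha$-stable heat convolution
$$G(r,x)\;:=\;\bigl(p(1-r^\alpha,\cdot)\ast u_M(r^\alpha,\cdot)\bigr)(x).$$

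It then remains to show $\int_0^1 (1-r^\alpha)^{-1/\alpha}G(r,x)\,dr\to 0$ as $|x|\to\infty$, which I would obtain by dominated convergence in the variable $r$. For pointwise convergence at each fixed $r\in(0,1)$, the function $u_M(r^\alpha,\cdot)$ lies in $L^1$ (with $\|u_M(r^\alpha,\cdot)\|_1=\|u_M(1,\cdot)\|_1$ by scaling), and $p(1-r^\alpha,\cdot)$ is bounded and vanishes at infinity by (\ref{est:p}); the standard fact that the convolution of an $L^1$ function with a bounded $C_0$ kernel is itself in $C_0$ then gives $G(r,x)\to 0$ as $|x|\to\infty$. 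For the dominant I would use the two complementary estimates
$$G(r,x)\le \|u_M(r^\alpha,\cdot)\|_\infty = C r^{-d},\qquad G(r,x)\le \|p(1-r^\alpha,\cdot)\|_\infty\|u_M(r^\alpha,\cdot)\|_1\le C(1-r^\alpha)^{-d/\alpha},$$
so $(1-r^\alpha)^{-1/\alpha}G(r,x)\le C(1-r^\alpha)^{-1/\alpha}\min\bigl(r^{-d},(1-r^\alpha)^{-d/\alpha}\bigr)$; this is bounded on $[0,\tfrac12]$ (where $(1-r^\alpha)^{-d/\alpha}$ controls) and behaves like $(1-r)^{-1/\alpha}$ near $r=1$ (where $r^{-d}$ controls), hence integrable on $[0,1]$ because $\alpha>1$.

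The only delicate point is the integrability near $r=1$. A naive sup-norm bound on $p(1-r^\alpha,\cdot)$ alone would contribute a factor $(1-r^\alpha)^{-d/\alpha}$ which, multiplied by $(1-r^\alpha)^{-1/\alpha}$, yields a non-integrable singularity. This is precisely the obstacle the linearization via $\|u_M(1,\cdot)\|_\infty$ is designed to overcome: it lets one switch to the complementary bound $G(r,x)\le\|u_M(r^\alpha,\cdot)\|_\infty=r^{-d}\|u_M(1,\cdot)\|_\infty$, which stays bounded as $r\to 1$, so that the only remaining singular factor near $r=1$ is the integrable $(1-r^\alpha)^{-1/\alpha}$.
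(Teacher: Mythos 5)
Your argument is correct, and it takes a genuinely different route from the paper's. Both proofs start from Lemma~\ref{lem:DuhamelEst}, but they diverge immediately afterwards. The paper splits the $r$-integral at $r=1/2$ and for each piece performs a hands-on spatial decomposition: for $r<1/2$ it uses that $\int_{B(0,R)^c}[u_M(1,w)]^{1+q}\,dw$ is small; for $r>1/2$ it uses the smallness of the tail of $p(1,\cdot)$, the smallness of the superlevel set $\{u_M(1,\cdot)>\varepsilon\}$, and then Lemma~\ref{est:inteps} to tame the $(1-r^\alpha)^{-1/\alpha}$ singularity. Your proof instead linearizes by $[u_M(1,w)]^{q+1}\le\|u_M(1,\cdot)\|_\infty^q\,u_M(1,w)$, then uses the substitution $z=rw$ and the self-similarity $u_M(1,z/r)=r^du_M(r^\alpha,z)$ to rewrite the inner integral as the heat convolution $G(r,x)=(p(1-r^\alpha,\cdot)\ast u_M(r^\alpha,\cdot))(x)$, and finishes by dominated convergence in $r$ (using $C_0\ast L^1\subset C_0$ for the pointwise limit and the two complementary bounds $G\le\|u_M(r^\alpha,\cdot)\|_\infty$ and $G\le\|p(1-r^\alpha,\cdot)\|_\infty\|u_M(r^\alpha,\cdot)\|_1$ for the integrable dominant). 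Your route is cleaner: it avoids the $\varepsilon$-bookkeeping and dispenses with Lemma~\ref{est:inteps} entirely. It buys this simplicity by exploiting more structure (the radial $\alpha$-stable density, mass conservation, and scaling) to reduce to a standard convolution fact; the paper's proof, while more laborious, uses only elementary tail estimates and is in that sense more self-contained. One small point worth recording explicitly: the dominant $(1-r^\alpha)^{-1/\alpha}\min(r^{-d},(1-r^\alpha)^{-d/\alpha})$ is integrable on $[0,1]$ precisely because $1/\alpha<1$, i.e.\ $\alpha>1$, the same hypothesis the paper uses in its estimate of $I_2$.
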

\begin{proof}
Let us rewrite  (\ref{eq:DuhamelEst}) into the form
\begin{align*}
 u_M(1,x) \ls &\   p(1,x) +  I_1(x)+I_2(x),
\end{align*}
where 
\begin{align*}
 I_1 & =\int_0^{1/2} \int_{\RR^d} (1-r^\alpha)^{-1/\alpha} p(1-r^\alpha,x,r w)  [u_M(1,w)]^{1+q}\,dw\,dr,\\
I_2 & =  \int_{1/2}^1 \int_{\RR^d} (1-r^\alpha)^{-1/\alpha} p(1-r^\alpha,x,r w)  [u_M(1,w)]^{1+q}\,dw\,dr.
\end{align*}
Note that $a_R := \int_{B(0,R)^c} [u_M(1,w)]^{1+q}dw \to 0$ as $R \to \infty$. Using (\ref{est:p}) and estimating $1-r^\alpha\approx1$, $r\in(0,1/2)$, we obtain for $|x|>R$,
\begin{align*}
 I_1 & =\int_0^{1/2}  \int_{B(0,R)}\ldots dw\,dr+\int_0^{1/2}  \int_{B(0,R)^c}\dots dw\,dr\\
&\ls\int_0^{1/2}  \int_{B(0,R)}\frac{1}{|x|^{d+\alpha}}  u_M(1,w)^{1+q}\,dw\, dr + \int_0^{1/2}  \int_{B(0,R)^c} [u_M(1,w)]^{1+q}dw\,dr\\
&\le \frac{|B(0,R)|}{|R|^{d+\alpha}}  \left\|u_M(1,\cdot)\right\|_\infty^{1+q}+a_R,
\end{align*}
which is arbitrary small for sufficiently large $R$. Consider now the integral $I_2$ and fix $\varepsilon>0$. There exist $R_1,R_2>0$ such that 
\begin{align*}
\nonumber\int_{B(0,R_1)^c}p(1,0,w)dw&<\varepsilon,\\
\int_{B(0,R_2)^c}u_M(1,w)dw&<\varepsilon^{d+1}.
\end{align*}
The latter  inequality implies that the measure of the set $\{w\in B(0,R_2)^c:u_M(1,w)>\varepsilon\}$ is less or equal to  $\varepsilon^d$. It gives us for $r\in(1/2,1)$ and $|x|>R_1+R_2$
\begin{align*}
\int_{\RR^d} &p(1-r^\alpha,x,r w) [u_M(1,w)]^{1+q}\,dw\\
&=  \frac1{r^{d}}\int_{\RR^d} p(1-r^\alpha,x,w) [u_M(1,w/r)]^{1+q}\,dw\\
&\le \frac{\left\|u_M(1,\cdot)\right\|_{\infty}^{1+q}}{r^d}\int_{B(x,R_1)^c} p(1-r^\alpha,x,w) \,dw+\frac{\varepsilon^{1+q}}{r^d}\int_{\substack{B(x,R_1),\\u_M(1,w/r)\le\varepsilon}} p(1-r^\alpha,x,w) \,dw\\
&\ \ \ +\frac1{r^d}\left\|u_M(1,\cdot)\right\|_{\infty}^{1+q}\int_{\substack{B(x,R_1),\\u_M(1,w/r)>\varepsilon}} p(1-r^\alpha,x,w) \,dw.
\end{align*}
Using scaling property  (\ref{eq:scalingp}) and substituting $w=x+(1-r^\alpha)^{1/\alpha}z$, we get 
\begin{align*}
\int_{B(x,R_1)^c} p(1-r^\alpha,x,w) \,dw=\int_{B\(0,R_1(1-r^\alpha)^{-1/\alpha}\)^c} p(1,0,z) \,dz\le \varepsilon.
\end{align*}
Moreover,
\begin{align*}
&\left|\{w\in B(x,R_1):u_M(1,w/r)>\varepsilon\}\right|=\\
&\hspace{20mm}=r^d\left|\{w\in B(x/r,R_1/r):u_M(1,w)>\varepsilon\}\right|\\
&\hspace{20mm}\le \left|\{w\in B(0,R_2)^c:u_M(1,w)>\varepsilon\}\right|\\
&\hspace{20mm}\le \varepsilon^d\le \left|B(0,\varepsilon)\right|.
\end{align*}
Hence, by the fact that $p(s,x,w)$ is a decreasing function of $|x-w|$ and  by Lemma \ref{est:inteps}, we obtain
\begin{align*}
I_2(x)&\ls\ \varepsilon+\int_{1/2}^1\int_{B(x,\varepsilon)} p(1-r^\alpha,x,w) \,dw\,dr\\
&\ls\varepsilon+\int_{1/2}^1(1-r^\alpha)^{-1/\alpha}\(\frac{\varepsilon}{(1-r^\alpha)^{1/\alpha}+\varepsilon}\)^ddr\\
&\ls\varepsilon+\int_{0}^{1-(1/2)^\alpha}u^{-1/\alpha}\(\frac{\varepsilon}{u^{1/\alpha}+\varepsilon}\)^ddu\\
&=\varepsilon+\int_{0}^{\varepsilon^{\alpha/2}}...du+\int_{\varepsilon^{\alpha/2}}^{1-(1/2)^\alpha}...du\\
&\le\varepsilon+\int_{0}^{\varepsilon^{\alpha/2}}u^{-1/\alpha}du+\frac{\varepsilon^{d/2}}{(1+\sqrt \varepsilon)^d}\int_{\varepsilon^{\alpha/2}}^{1-(1/2)^\alpha}u^{-1/\alpha}du\\
&\ls\varepsilon+\varepsilon^{(\alpha-1)/2}+\varepsilon^{d/2}.
\end{align*}
Therefore, for sufficiently large $|x|$, integrals $I_1$ and $I_2$ are arbitrary small. This ends the proof.
\end{proof}

\section{Proof of Theorem \ref{mainthm}}
In this section, we prove the main theorem of the paper. First, we define some auxiliary functions. Let
\begin{align}\label{def:H}
H(x,w) =& \int_0^1 (1-r^\alpha)^{-1/\alpha} p(1-r^\alpha,x,r w) dr.
\end{align}
For $\beta\in (0,1)$, we define 
\begin{align}
\label{def:tildeH}
\tilde H(x,w) =& \int_0^1 r^{-\beta}(1-r^\alpha)^{-1/\alpha} p(1-r^\alpha,x,r w) dr.
\end{align}
Additionally, for $R>0$, we denote
\begin{align}\label{def:hR}
h_R(x) &= \int_{B(0,R)} H(x,w) [u_M(1,w)] ^{1+q}dw,\\\label{def:tildehR}
\tilde h_R(x) &= \int_{B(0,R)} \tilde H(x,w) [u_M(1,w)] ^{1+q}dw,\\\label{def:tildeHR}
\tilde H_R(x) &= \int_{B(0,R)^c} \tilde H(x,w) u_M(1,w)dw.
\end{align}
Note that $H(x,w)\le\tilde H(x,w)$ and $h_R(x)\le\tilde h_R(x)$.

\begin{lemma}\label{lem:intHp}
For $x\in\RR^d$, we have
\begin{equation}
\int_{\RR^d}  H(x,w) p(1,w) dw = C_2\, p(1,x),
\label{eq:intHp}
\end{equation}
where
$$
C_2  =\frac{\pi}{\alpha\sin\left(\pi/\alpha\right)} > 1.
$$
\end{lemma}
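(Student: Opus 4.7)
The plan is to interchange the order of integration and then collapse the spatial integral via the Chapman--Kolmogorov identity, reducing the problem to a one-dimensional Beta integral.

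First I would apply Fubini to write
\[
\int_{\RR^d} H(x,w)p(1,w)\,dw
= \int_0^1 (1-r^\alpha)^{-1/\alpha}\!\int_{\RR^d} p(1-r^\alpha,x,rw)\,p(1,w)\,dw\,dr.
\]
In the inner integral I substitute $w' = rw$ and then invoke the scaling identity (\ref{eq:scalingp}) with $a = r$ to rewrite $r^{-d}p(1,w'/r) = p(r^\alpha,w')$. This yields
\[
\int_{\RR^d} p(1-r^\alpha,x,rw)\,p(1,w)\,dw
= \int_{\RR^d} p(1-r^\alpha,x,w')\,p(r^\alpha,0,w')\,dw'.
\]

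The key step is to recognize the right-hand side as the Chapman--Kolmogorov convolution for the $\alpha$-stable semigroup: using the symmetry $p(r^\alpha,0,w') = p(r^\alpha,w',0)$ and the semigroup identity $\int p(s,x,y)p(t,y,z)\,dy = p(s+t,x,z)$, this integral equals $p(1,x,0) = p(1,x)$, independently of $r$. Hence
\[
\int_{\RR^d} H(x,w)p(1,w)\,dw = p(1,x)\int_0^1 (1-r^\alpha)^{-1/\alpha}\,dr.
\]

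It remains to evaluate the scalar integral. Substituting $u = r^\alpha$ turns it into $\frac{1}{\alpha}\int_0^1 u^{1/\alpha - 1}(1-u)^{-1/\alpha}\,du = \frac{1}{\alpha}B(1/\alpha,\,1-1/\alpha)$, and the Euler reflection formula gives this equal to $\frac{\pi}{\alpha\sin(\pi/\alpha)} = C_2$. Finally, to confirm $C_2 > 1$, set $t = \pi/\alpha \in (\pi/2,\pi)$; then $C_2 = t/\sin t$, and since $t > \pi/2 > 1 \ge \sin t$ on this range, we get $C_2 > 1$. I do not expect a serious obstacle here --- the only minor care needed is the use of Fubini (justified by positivity of all integrands) and the substitution bookkeeping to align the two heat kernels so that the semigroup property applies cleanly.
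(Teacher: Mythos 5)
Your proof is correct and follows essentially the same route as the paper's: reduce via scaling and Chapman--Kolmogorov to the scalar integral $\int_0^1(1-r^\alpha)^{-1/\alpha}\,dr$, which is a Beta integral evaluated by Euler's reflection formula. The only cosmetic difference is the bookkeeping of the scaling (you scale $p(1,w)$ into $p(r^\alpha,w')$ so that the two time parameters sum directly to $1$, whereas the paper scales the first kernel to time $s^{-\alpha}-1$ and then scales back after applying Chapman--Kolmogorov), and you add an explicit verification that $C_2>1$, which the paper states without proof.
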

\begin{proof}
By scaling property and Chapman-Kolmogorov equation for   the function $p(t,x,y)$, we get 
\begin{align}\nonumber
 \int_{\RR^d} p(1-s^\alpha,x,sw)  p(1,w)\,dz&= \int_{\RR^d} s^{-d}p(s^{-\alpha}-1,s^{-1}x,w)  p(1,w)\,dz\\\label{eq:C-K}
&=  s^{-d}p(s^{-\alpha},s^{-1}x,) =  p(1,x). 
\end{align}
Consequently,
\begin{align*}
\int_{\RR^d} H(x,w) p(1,w) dw  &= \int_0^1\int_{\RR^d}(1-s^\alpha)^{-1/\alpha} p(1-s^\alpha,x,sw) p(1,w) dw\,ds \\
 &=  p(1,x) \int_0^1(1-s^\alpha)^{-1/\alpha}  ds
=\frac1\alpha \Gamma\left(1-\frac{1}{\alpha}\right)\Gamma\left(\frac{1}{\alpha}\right) \,p(1,x)\\
&=\frac{\pi}{\alpha\sin\left(\pi/\alpha\right)}\,p(1,x),
\end{align*}
%where $B(\cdot,\cdot)$ denotes the beta function, and the last equality comes from  Euler's reflection formula.
where the last equality results from the Euler's reflection formula.
\end{proof}
The next step is to  provide a Chapman-Kolmogorov-like inequality involving functions $H(x,w)$ and $\tilde H(x,w)$. At first, we present a technical lemma.
\begin{lemma}\label{lem:tech}Let $\beta>0$ be fixed. For $v\in(0,1)$, we have
$$\int_v^1 r^{-\beta}(1-r^\alpha)^{-1/\alpha}(r^\alpha-v^\alpha)^{-1/\alpha}    dr\approx v^{-\beta}(1-v)^{1-2/\alpha}.$$
\end{lemma}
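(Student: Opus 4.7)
The claim is a two-sided asymptotic estimate for a Beta-type integral, and the plan is to verify it by analyzing two regimes separately: $v$ bounded away from $1$, and $v$ close to $1$. In each regime, one of the two factors $v^{-\beta}$ or $(1-v)^{1-2/\alpha}$ on the right-hand side is comparable to a positive constant, while the other captures the genuine singular behaviour and is produced by an elementary substitution.

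For $v \in (0,1/2]$ the target reduces to $v^{-\beta}$. I would split $\int_v^1 = \int_v^{2v} + \int_{2v}^1$ (for $v$ very small; on the compact middle range $[1/4,3/4]$ the integral is trivially $\approx 1$ by continuity and positivity, matching both $v^{-\beta}$ and $(1-v)^{1-2/\alpha}$). On $(v,2v)$ one has $r\approx v$, so $r^{-\beta}\approx v^{-\beta}$ and $(1-r^\alpha)^{-1/\alpha}\approx 1$; the substitution $r=vt$ turns $\int_v^{2v}(r^\alpha-v^\alpha)^{-1/\alpha}dr$ into $\int_1^2(t^\alpha-1)^{-1/\alpha}dt$, which is finite since $1/\alpha<1$. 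On $(2v,1)$ the bound $r^\alpha-v^\alpha\approx r^\alpha$ (upper bound trivial, lower bound from $v\le r/2$) gives $(r^\alpha-v^\alpha)^{-1/\alpha}\approx r^{-1}$, and a further split at $r=1/2$ yields $\int_{2v}^{1/2}r^{-\beta-1}dr\approx v^{-\beta}$ (this is where the factor $v^{-\beta}$ arises, using $\beta>0$) plus a bounded contribution from $\int_{1/2}^1(1-r^\alpha)^{-1/\alpha}dr$, which is integrable as $1/\alpha<1$.

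For $v\in[1/2,1)$ the target reduces to $(1-v)^{1-2/\alpha}$ since $v^{-\beta}\approx 1$. Here $r\in(v,1)\subset(1/2,1)$, so $r^{-\beta}\approx 1$, and writing $r^\alpha-v^\alpha=\alpha\int_v^r t^{\alpha-1}dt$ with $t^{\alpha-1}$ bounded above and below on $(1/2,1)$ gives $r^\alpha-v^\alpha\approx r-v$ and analogously $1-r^\alpha\approx 1-r$. The substitution $r=v+(1-v)t$ converts the integral into
\[
(1-v)^{1-2/\alpha}\int_0^1 t^{-1/\alpha}(1-t)^{-1/\alpha}\,dt = (1-v)^{1-2/\alpha}B(1-1/\alpha,1-1/\alpha),
\]
which is finite and exactly of the required order. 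No step is a genuine obstacle; the main point is choosing the right elementary substitution in each regime and matching constants on the overlap, which is automatic because the integral depends continuously and positively on $v$ on any compact subinterval of $(0,1)$.
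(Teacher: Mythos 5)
Your proposal is correct and follows essentially the same strategy as the paper's own proof: split on whether $v$ is near $0$ or near $1$, reduce the $v\to 1$ regime to the Beta integral $\int_0^1 t^{-1/\alpha}(1-t)^{-1/\alpha}\,dt$ via an affine substitution, and extract $v^{-\beta}$ from the behaviour near $r=v$ when $v$ is small. The only difference is organizational: the paper first applies the uniform approximation $a^\gamma-b^\gamma\approx(a-b)a^{\gamma-1}$ to rewrite the integrand as $r^{1/\alpha-1-\beta}(1-r)^{-1/\alpha}(r-v)^{-1/\alpha}$ once and for all and then case-splits, whereas you tailor the comparison ($r^\alpha-v^\alpha\approx r^\alpha$ on $(2v,1)$ vs.\ $\approx r-v$ on $(1/2,1)$) separately inside each regime and add an extra split at $r=2v$; both routes yield the same two-sided bound.
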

\begin{proof} Denote the above integral by $I(v)$. Since $a^\gamma-b^\gamma\approx (a-b)a^{\gamma-1}$ for $a>b>0$  and $\gamma>0$ (cf. Lemma 4 in \cite{MS2012}), we have $1-r^\alpha\approx 1-r$ and $r^\alpha-v^\alpha\approx(r-v)r^{\alpha-1}$. Hence,
$$I(v) \approx \int_v^1 r^{1/\alpha-1-\beta}(1-r)^{-1/\alpha}(r-v)^{-1/\alpha}dr. $$
For $v\ge1/4$, we estimate $ r^{1/\alpha-1-\beta} \approx 1$ and substitute $r=1-u(1-v)$, which gives us
$$I(v) \approx (1-v)^{1-2/\alpha} \int_0^1 u^{-1/\alpha}(1-u)^{-1/\alpha}du\approx (1-v)^{1-2/\alpha}.$$
In the case $v<1/4$, we split the integral into $\int_v^{1/2}+\int_{1/2}^1$ and obtain
\begin{align*}
I(v) &\approx \int_v^{1/2} r^{1/\alpha-1-\beta}(r-v)^{-1/\alpha}dr+ \int_{1/2}^1 (1-r)^{-1/\alpha}dr\\
&= v^{-\beta}\int_1^{1/(2v)} u^{1/\alpha-1-\beta}(u-1)^{-1/\alpha}du+\frac{\alpha2^{(\alpha-1)/\alpha}}{\alpha-1}\\ 
&\approx v^{-\beta}+1\approx v^{-\beta}, 
\end{align*}
which is equivalent to the required formula.
\end{proof}

Since $\alpha>1$, we immediately obtain the following 
\begin{cor}
\label{lem:estint}Let $\beta>0$ be fixed. For $v\in(0,1)$, we have
$$\int_v^1 r^{-\beta}(1-r^\alpha)^{-1/\alpha}(r^\alpha-v^\alpha)^{-1/\alpha}    dr\ls v^{-\beta}(1-v)^{-1/\alpha}.$$
\end{cor}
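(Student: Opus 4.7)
The plan is to deduce the corollary directly from Lemma \ref{lem:tech}, which provides the two-sided estimate
\[
\int_v^1 r^{-\beta}(1-r^\alpha)^{-1/\alpha}(r^\alpha-v^\alpha)^{-1/\alpha}\,dr \approx v^{-\beta}(1-v)^{1-2/\alpha}.
\]
Thus the corollary reduces to the pointwise inequality $(1-v)^{1-2/\alpha} \ls (1-v)^{-1/\alpha}$ for $v \in (0,1)$, with an implicit constant independent of $v$.

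First, I would write $(1-v)^{1-2/\alpha} = (1-v)^{-1/\alpha}\cdot (1-v)^{1-1/\alpha}$, which isolates the extra factor $(1-v)^{1-1/\alpha}$. The assumption $\alpha > 1$ yields $1 - 1/\alpha > 0$, so this factor is a positive power of a quantity bounded in $(0,1)$. Consequently $(1-v)^{1-1/\alpha} \le 1$ uniformly on $(0,1)$, and multiplying through by $v^{-\beta}$ and the upper bound furnished by Lemma \ref{lem:tech} finishes the argument.

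The proof is essentially a one-line monotonicity remark, so there is no real obstacle; the only point worth checking is the sign of the exponent $1-1/\alpha$, which is precisely where the hypothesis $\alpha > 1$ is used. The weaker bound with $(1-v)^{-1/\alpha}$ is presumably chosen because it will match more conveniently with the estimates needed later in the proof of Theorem \ref{mainthm}.
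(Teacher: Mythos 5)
Your proof is correct and takes exactly the route the paper intends: the corollary is obtained from Lemma~\ref{lem:tech} by noting that $(1-v)^{1-2/\alpha}\le (1-v)^{-1/\alpha}$ since $1-1/\alpha>0$ for $\alpha>1$. The paper states this as "Since $\alpha>1$, we immediately obtain the following," and your proposal simply spells out that one-line observation.
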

This allows us to prove the following lemma.
\begin{lemma}\label{lem:intHH}There exists a constant $C_3>0$ such that for $x,z\in\RR^d$, we have
\begin{equation}
\int_{\RR^d}  H(x,w) \tilde H(w,z) dw \le C_3 \tilde H(x,z).
\label{eq:intHH}
\end{equation}
\end{lemma}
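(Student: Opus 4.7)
The plan is to interchange the order of integration in the triple integral on the left-hand side, evaluate the spatial $w$-integral via a rescaled Chapman--Kolmogorov identity, and then reduce the remaining double integral in $(r,s)$ to the form of $\tilde H(x,z)$ by the substitution $v=rs$.

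After Fubini and writing out the definitions (\ref{def:H}) and (\ref{def:tildeH}), the inner spatial integral is $\int_{\RR^d} p(1-r^\alpha,x,rw)\,p(1-s^\alpha,w,sz)\,dw$. The two kernels do not convolve directly because the middle variable enters as $rw$ in one and as $w$ in the other. Substituting $u=rw$ and applying the scaling property (\ref{eq:scalingp}) with $a=r$ to the second density rewrites $p(1-s^\alpha,u/r,sz)$ as $r^d p(r^\alpha(1-s^\alpha),u,rsz)$, which absorbs the Jacobian $r^{-d}$ exactly; Chapman--Kolmogorov then yields $p(1-(rs)^\alpha,x,rsz)$.

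Next, setting $v=rs$ (with $r$ kept as the second variable) gives $s=v/r$, $ds=dv/r$, and the new domain is $\{0<v<r<1\}$. Expanding $s^{-\beta}(1-s^\alpha)^{-1/\alpha}=v^{-\beta}r^\beta\cdot r(r^\alpha-v^\alpha)^{-1/\alpha}$, combining with the Jacobian $r^{-1}$ and the factor $(1-r^\alpha)^{-1/\alpha}$ from $H$, and then interchanging the order of the $r$- and $v$-integrations, one arrives at
\begin{align*}
\int_{\RR^d} H(x,w)\,\tilde H(w,z)\,dw = \int_0^1 v^{-\beta}\,p(1-v^\alpha,x,vz)\,J(v)\,dv,
\end{align*}
where $J(v):=\int_v^1 r^\beta (1-r^\alpha)^{-1/\alpha}(r^\alpha-v^\alpha)^{-1/\alpha}\,dr$. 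Since $1-v^\alpha\approx 1-v$ for $v\in(0,1)$, comparison with (\ref{def:tildeH}) reduces the lemma to the uniform bound $J(v)\ls (1-v^\alpha)^{-1/\alpha}$.

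This last estimate is the main obstacle, and notably it does \emph{not} follow from Corollary \ref{lem:estint}: the trivial bound $r^\beta\le r^{-\beta}$ for $r\le 1$ would produce an extra factor $v^{-\beta}$ that cannot be absorbed into $\tilde H(x,z)$. Instead I would retrace the proof of Lemma \ref{lem:tech} with the positive exponent $r^\beta$. For $v\ge 1/2$ we have $r^\beta\approx 1$, and the substitution $r=1-u(1-v)$ gives $J(v)\ls (1-v)^{1-2/\alpha}\le (1-v)^{-1/\alpha}$, using $1-2/\alpha\ge -1/\alpha$ since $\alpha\ge 1$. For $v<1/2$ I split $J(v)=\int_v^{1/2}+\int_{1/2}^1$; the tail is bounded since $r^\alpha-v^\alpha\ge c>0$ there, and on the head I use $(r^\alpha-v^\alpha)\approx(r-v)r^{\alpha-1}$ and substitute $r=vu$ to obtain $v^\beta\int_1^{1/(2v)} u^{\beta+(1-\alpha)/\alpha}(u-1)^{-1/\alpha}\,du$. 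The positivity of $\beta$ is essential here: the integrand decays like $u^{\beta-1}$ at infinity, and its primitive $u^\beta/\beta$ evaluated at $1/(2v)$ contributes a factor $v^{-\beta}$ that exactly cancels the prefactor, leaving a bounded contribution. Hence $J(v)\ls 1\le(1-v^\alpha)^{-1/\alpha}$ for $v<1/2$, and the lemma follows with $C_3$ collecting all constants.
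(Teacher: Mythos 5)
Your derivation follows the same road map as the paper's: Fubini, the rescaled Chapman--Kolmogorov identity to collapse the $w$-integral to $p(1-(rs)^\alpha,x,rsz)$, the substitution $v=rs$, and finally a uniform bound on a one-dimensional beta-type integral. The only structural difference is which auxiliary variable you keep. The paper assigns the $\beta$-weight to the outer variable it retains (the one coming from $\tilde H$), so its inner integral is $\int_v^1 r^{-\beta}(1-r^\alpha)^{-1/\alpha}(r^\alpha-v^\alpha)^{-1/\alpha}\,dr$, which is exactly what Lemma~\ref{lem:tech}/Corollary~\ref{lem:estint} bounds. You retain the $H$-variable and pull out $v^{-\beta}$, leaving $J(v)=\int_v^1 r^{\beta}(1-r^\alpha)^{-1/\alpha}(r^\alpha-v^\alpha)^{-1/\alpha}\,dr$. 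You then observe, correctly, that naively replacing $r^\beta$ by $r^{-\beta}$ in $J(v)$ and invoking the corollary loses a factor $v^{-\beta}$. What you missed is that the change of variables $r\mapsto v/r$ shows $J(v)=v^{\beta}\int_v^1 r^{-\beta}(1-r^\alpha)^{-1/\alpha}(r^\alpha-v^\alpha)^{-1/\alpha}\,dr$ exactly, so Corollary~\ref{lem:estint} does give $J(v)\ls (1-v)^{-1/\alpha}$ directly; your re-derivation of the estimate is therefore an unnecessary (though harmless) detour that replicates the proof of Lemma~\ref{lem:tech} with $\beta$ replaced by $-\beta$.

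One small slip in that re-derivation: when you split $J(v)=\int_v^{1/2}+\int_{1/2}^1$ for $v<1/2$, the justification ``$r^\alpha-v^\alpha\ge c>0$ on $[1/2,1]$'' is false uniformly in $v<1/2$, since $r^\alpha-v^\alpha\to 0$ at $r=1/2$ as $v\uparrow 1/2$. The tail is nonetheless bounded because the singularity $(r-v)^{-1/\alpha}$ is integrable (for $v\le 1/2$ and $r\ge 1/2$ one has $(r-v)^{-1/\alpha}\le (r-1/2)^{-1/\alpha}$), or one can split at $v<1/4$ as the paper does. Apart from this, the argument is correct.
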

\begin{proof}
Scaling property and Chapman-Kolmogorov equation for the function $p(t,x,y)$ give us
\begin{align*}
&\int_{\RR^d}  H(x,w) \tilde H(w,z) dw\\  
&= \int_0^1\int_0^1\int_{\RR^d}r^{-\beta} (1-s^\alpha)^{-1/\alpha} p(1-s^\alpha,x,sw) (1-r^\alpha)^{-1/\alpha} p(1-r^\alpha,w,rz) dw\,ds\, dr\\
&= \int_0^1\int_0^1\int_{\RR^d} r^{-\beta}(1-s^\alpha)^{-1/\alpha} s^{-d} p(s^{-\alpha}-1,s^{-1}x,w) (1-r^\alpha)^{-1/\alpha} p(1-r^\alpha,w,rz) dw\,ds\, dr\\
&= \int_0^1\int_0^1 r^{-\beta}(1-s^\alpha)^{-1/\alpha}(1-r^\alpha)^{-1/\alpha} s^{-d} p(s^{-\alpha}-r^\alpha,s^{-1}x, r z)  ds \,dr\\
&= \int_0^1\int_0^1 r^{-\beta}(1-s^\alpha)^{-1/\alpha}(1-r^\alpha)^{-1/\alpha}  p(1-s^{\alpha}r^\alpha,x, sr z)  ds\, dr.
\end{align*}
Substituting $s=v/r$ in the inner integral and then using Fubini-Tonelli theorem, we get
\begin{align*}
&\int_{\RR^d}  H(x,w) \tilde H(w,z) dw\\  
&= \int_0^1p(1-v^{\alpha},x, v z)\int_v^1 r^{-\beta}(1-r^\alpha)^{-1/\alpha}(r^\alpha-v^\alpha)^{-1/\alpha}    dr\, dv.
\end{align*}
By Corollary \ref{lem:estint}, we  obtain (\ref{eq:intHH}).
\end{proof}

\begin{rem} \rm
Lemma \ref{lem:tech}, which plays an important role in the above-given proof of Lemma \ref{lem:intHH},   does not hold for $\beta=0$. This explains partly the form of the functions  $\tilde H(x,w)$ and $\tilde h_R(x)$.
\end{rem}
As a consequence, we get
\begin{cor}\label{cor:intH}
For $x\in\RR^d$, we have
\begin{align}
\int_{\RR^d} H(x,w) \tilde  h_R(w) dw &\le C_3 \tilde h_R(x) \label{eq:intHhR}, \\
\int_{\RR^d}  H(x,w) \tilde H_R(w) dw &\le C_3 \tilde H_R(x) \label{eq:intHHR}. 
\end{align}
\end{cor}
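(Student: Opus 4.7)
\textbf{Proof plan for Corollary \ref{cor:intH}.} The plan is to reduce both inequalities directly to Lemma \ref{lem:intHH} by swapping the order of integration. Everything in sight is nonnegative (since $p$, $u_M$, and the weights $r^{-\beta}(1-r^\alpha)^{-1/\alpha}$ are all nonnegative), so Fubini--Tonelli applies without integrability issues.

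For the first inequality, I would expand $\tilde h_R$ using definition (\ref{def:tildehR}) and interchange the two integrations:
\begin{align*}
\int_{\RR^d} H(x,w)\,\tilde h_R(w)\,dw
&= \int_{\RR^d} H(x,w) \int_{B(0,R)} \tilde H(w,z)\,[u_M(1,z)]^{1+q}\,dz\,dw \\
&= \int_{B(0,R)} [u_M(1,z)]^{1+q} \int_{\RR^d} H(x,w)\,\tilde H(w,z)\,dw\,dz.
\end{align*}
Then Lemma \ref{lem:intHH} bounds the inner integral by $C_3 \tilde H(x,z)$, and reinserting this into the outer integral reproduces $C_3 \tilde h_R(x)$ by (\ref{def:tildehR}).

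The second inequality is treated identically: expand $\tilde H_R$ via (\ref{def:tildeHR}), swap the order of integration so that the inner integral is $\int_{\RR^d} H(x,w)\tilde H(w,z)\,dw$, apply Lemma \ref{lem:intHH}, and recognize the resulting outer integral as $C_3 \tilde H_R(x)$ (the only change is that the domain of the outer integral is $B(0,R)^c$ and the density is $u_M(1,z)$ rather than $[u_M(1,z)]^{1+q}$).

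There is essentially no obstacle here; the statement is a routine consequence of Lemma \ref{lem:intHH} combined with Fubini--Tonelli. The only thing to check carefully is that applying Fubini is legitimate, which follows from nonnegativity of the integrands. Thus the whole proof should fit in a few lines.
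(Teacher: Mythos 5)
Your argument is correct and is precisely the derivation the paper leaves implicit (it only writes ``As a consequence, we get''): expand the definitions of $\tilde h_R$ and $\tilde H_R$, use Fubini--Tonelli (valid by nonnegativity of all factors), and apply Lemma~\ref{lem:intHH} to the inner $w$-integral. Nothing to add.
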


Now, we pass to the proof of the main result of the paper. 
\begin{proof}[{\bf Proof of Theorem \ref{mainthm}}]
Let 
$C_0 = C_1(C_2 \vee C_3)$.
%$C_0=[(M \vee C_1) C_1 (C_2\vee C_3)]\vee 1$. 
By Lemma \ref{lem:limit u}, we may choose $\eta\in(0,1)$ and $R>0$  such that $|u_M(1,x)|<\(\frac{\eta}{C_0}\)^{1/q}$ for $|x|>R$. 
Thus,  by Lemma \ref{lem:DuhamelEst}, we have
\begin{align}\label{eq:uMdecomp}
u_M(1,x) &\le Mp(1,x) +C_1 \tilde h_R(x) + \frac{C_1\eta}{C_0} \int_{B(0,R)^c}  H(x,w) u_M(1,w) dw\\
\label{eq:uMdecompH}
&\le Mp(1,x) +C_1 \tilde h_R(x) + \frac{C_1\eta}{C_0}\, \tilde H_R(x).
\end{align}  

We put  (\ref{eq:uMdecompH}) to (\ref{eq:uMdecomp}) and, by Lemma \ref{lem:intHp} and Corollary \ref{cor:intH}, we have
\begin{align}
	u_M(1,x) &\le M p(1,x) + C_1 \tilde h_R(x) \\
	& \ \ \ + \frac{C_1\eta}{C_0}\int_{B(0,R)^c}  H(x,w)\left[Mp(1,w) + C_1\tilde h_R(w) + \frac{C_1\eta}{C_0} \tilde H_R(w)\right] dw \notag\\
	&\le M(1+\eta)p(1,x) + C_1(1+\eta) \tilde h_R(x) +  \eta^2 \tilde H_R(x)\,. \label{eq:1step}
\end{align}
Now, we put  (\ref{eq:1step})   into (\ref{eq:uMdecomp}) and, by Lemma \ref{lem:intHp} and Corrolary \ref{cor:intH}, we obtain
\begin{align*}
	u_M(1,x) &\le M(1+\eta+\eta^2)p(1,x) + C_1(1+\eta+\eta^2) \tilde h_R(x) +  \eta^3 \tilde H_R(x)\,. 
\end{align*}
Hence, by induction, 
\begin{align*}
	u_M(1,x) &\le  M \sum_{k=0}^n \eta^k p(1,x) + C_1\sum_{k=0}^n \eta^k  \tilde h_R(x) +  \eta^{n+1} \tilde H_R(x)\,. 
\end{align*}
Taking $n \to \infty$, we get
\begin{align}\label{eq:iteration}
u_M(1,x) \le \frac{M}{1-\eta} p(1,x) + \frac{C_1}{1-\eta} \tilde h_R(x).
\end{align}

Furthermore,  since   both functions $p(1,\cdot)$ and $u_M(1,\cdot)$ are continuous and nonnegative (see \cite{BKW2001}, proof of Theorem 2.1), they are comparable on every compact set. Hence, we focus only  on large values of  $|x|$.  

 We first prove the upper estimate. Let $|x|>2R$. For $|w|<R$ and $s\in(0,1)$, we have $|x-sw|>|x|/2$, and consequently $p(s,x,sw)\ls |x|^{-d-\alpha}\approx p(1,x)$. Hence,
\begin{align*}
\tilde h_R(x)&=\int _{B(0,R)} \int_0^1 r^{-\beta} (1-r^\alpha)^{-1/\alpha}p(1-r^\alpha,x,r w)[u_M(1,w)]^{1+q} dr\,dw\\
&\ls  p(1,x)\left\|u_M(1,\cdot)\right\|^{1+q}_\infty\int _{B(0,R)} \int_0^1r^{-\beta} (1-r^\alpha)^{-1/\alpha} dr\,dw\\
&\approx\left\|u_M(1,\cdot)\right\|^{1+q}_\infty\,p(1,x),
\end{align*}
which, by  (\ref{eq:iteration}), gives us 
\begin{equation}\label{est:upper}
u_M(1,x)\ls p(1,x).
\end{equation}
To provide lower estimates we show that the integral in (\ref{eq:Duhamelscale}) is much smaller than $Mp(t,x)$ for sufficiently large $|x|$.  By (\ref{est:upper}) and (\ref{est:gradp}), there are constants $C_4, C_5>0$ such that
\begin{align*}
 &\left|\alpha\int_0^1 \int_{\RR^d} b \cdot \nabla_w p(1-s^\alpha,x,sw)  [u_M(1,w)]^{q+1}\,dw\,ds\right|\\
&\hspace{20mm}\le C_4 \int_0^1 \int_{\RR^d} \left| \nabla_w p(1-s^\alpha,x,sw)  [p(1,w)]^{q+1}\right|\,dw\,ds\\
&\hspace{20mm} \le C_5 \int_0^1 \int_{B(0,\tilde R)} \frac{1}{(1-s^\alpha)^{1/\alpha}+|x-sw|} p(1-s^\alpha,x,sw)  p(1,w)\,dw\,ds\\
&\hspace{24mm} + C_5 \int_0^1 \int_{B(0,\tilde R)^c} (1-s^{\alpha})^{-1/\alpha} p(1-s^\alpha,x,sw)  [p(1,w)]^{q+1}\,dw\,ds
\end{align*}
for  any $\tilde R>0$. We take $\tilde R$ such that $p(1,w)<\(4C_5\int_0^1(1-s^\alpha)^{-1/\alpha}ds/M\)^{- 1/q}$ for $|w|>\tilde R$.   Then, for  $|x|>(2\tilde R)\vee (8C_5/M)$, we obtain
\begin{align*}
 &\left|\alpha\int_0^1 \int_{\RR^d} b \cdot \nabla_w p(1-s^\alpha,x,sw)  [u_M(1,w)]^{q+1}\,dw\,ds\right|\\
&\hspace{20mm} \le \frac{2C_5}{|x|} \int_0^1 \int_{B(0,\tilde R)}  p(1-s^\alpha,x,sw)  p(1,w)\,dw\,ds\\
&\hspace{24mm}+ \frac{M}{4\int_0^1(1-s^\alpha)^{-1/\alpha}ds} \int_0^1  (1-s^{\alpha})^{-1/\alpha} \int_{B(0,\tilde R)^c} p(1-s^\alpha,x,sw)  p(1,w)\,dw\,ds.
\end{align*}
Hence, by (\ref{eq:C-K}), we get
\begin{align*}
 \left|\alpha\int_0^1 \int_{\RR^d} b \cdot \nabla_w p(1-s^\alpha,x,sw)  [u_M(1,w)]^{q+1}\,dw\,ds\right|\le \frac{M}{2}p(1,x),
\end{align*}
which, together with (\ref{eq:Duhamelscale}), implies
$$u_M(1,x) \ge \frac M2 p(t,x).$$
The proof is complete.
\end{proof}

\begin{rem}
Using the result of  Theorem \ref{mainthm} as well as  the  method of proving it  one can show that there exists a constant $C=C(d,\alpha,M,b)$ such that
$$\left|\nabla_xu_M (t,x)\right|\leq Ct^{-1/\alpha}p(t,x).$$
However, this estimate does not seem to be optimal.
\end{rem}

\subsection*{Acknowledgements}
{The paper is partially supported by grant MNiSW IP2012 018472}

\bibliographystyle{abbrv} 
\bibliography{burgers}

\def\cprime{$'$}
\begin{thebibliography}{10}

\bibitem{AIK}
N.~Alibaud, C.~Imbert, and G.~Karch.
\newblock Asymptotic properties of entropy solutions to fractal {B}urgers
  equation.
\newblock {\em SIAM J. Math. Anal.}, 42(1):354--376, 2010.

\bibitem{BFW}
P.~Biler, T.~Funaki, and W.~A. Woyczynski.
\newblock Fractal {B}urgers equations.
\newblock {\em J. Differential Equations}, 148(1):9--46, 1998.

\bibitem{BKW2001Studia}
P.~Biler, G.~Karch, and W.~A. Woyczy{\'n}ski.
\newblock Asymptotics for conservation laws involving {L}\'evy diffusion
  generators.
\newblock {\em Studia Math.}, 148(2):171--192, 2001.

\bibitem{BKW2001}
P.~Biler, G.~Karch, and W.~A. Woyczy{\'n}ski.
\newblock Critical nonlinearity exponent and self-similar asymptotics for
  {L}\'evy conservation laws.
\newblock {\em Ann. Inst. H. Poincar\'e Anal. Non Lin\'eaire}, 18(5):613--637,
  2001.

\bibitem{BJ1}
K.~Bogdan and T.~Jakubowski.
\newblock Estimates of heat kernel of fractional {L}aplacian perturbed by
  gradient operators.
\newblock {\em Comm. Math. Phys.}, 271(1):179--198, 2007.

\bibitem{BJ2}
K.~Bogdan and T.~Jakubowski.
\newblock Estimates of the {G}reen function for the fractional {L}aplacian
  perturbed by gradient.
\newblock {\em Potential Anal.}, 36(3):455--481, 2012.

\bibitem{BK2006}
L.~Brandolese and G.~Karch.
\newblock Far field asymptotics of solutions to convection equation with
  anomalous diffusion.
\newblock {\em J. Evol. Equ.}, 8(2):307--326, 2008.

\bibitem{CKS2012}
Z.-Q. Chen, P.~Kim, and R.~Song.
\newblock Dirichlet heat kernel estimates for fractional {L}aplacian with
  gradient perturbation.
\newblock {\em Ann. Probab.}, 40(6):2483--2538, 2012.

\bibitem{GR}
T.~Grzywny and M.~Ryznar.
\newblock Estimates of {G}reen functions for some perturbations of fractional
  {L}aplacian.
\newblock {\em Illinois J. Math.}, 51(4):1409--1438, 2007.

\bibitem{2009-TJ-KS-jee}
T.~Jakubowski and K.~Szczypkowski.
\newblock Time-dependent gradient perturbations of fractional {L}aplacian.
\newblock {\em J. Evol. Equ.}, 10(2):319--339, 2010.

\bibitem{JS2}
T.~Jakubowski and K.~Szczypkowski.
\newblock Estimates of gradient perturbation series.
\newblock {\em J. Math. Anal. Appl.}, 389(1):452--460, 2012.

\bibitem{KMX}
G.~Karch, C.~Miao, and X.~Xu.
\newblock On convergence of solutions of fractal {B}urgers equation toward
  rarefaction waves.
\newblock {\em SIAM J. Math. Anal.}, 39(5):1536--1549, 2008.

\bibitem{KNS}
A.~Kiselev, F.~Nazarov, and R.~Shterenberg.
\newblock Blow up and regularity for fractal {B}urgers equation.
\newblock {\em Dyn. Partial Differ. Equ.}, 5(3):211--240, 2008.

\bibitem{MM1}
Y.~Maekawa and H.~Miura.
\newblock On fundamental solutions for non-local parabolic equations with
  divergence free drift.
\newblock {\em Adv. Math.}, 247:123--191, 2013.

\bibitem{MM2}
Y.~Maekawa and H.~Miura.
\newblock Upper bounds for fundamental solutions to non-local diffusion
  equations with divergence free drift.
\newblock {\em J. Funct. Anal.}, 264(10):2245--2268, 2013.

\bibitem{MS2012}
J.~Ma{\l}ecki and G.~Serafin.
\newblock Hitting hyperbolic half-space.
\newblock {\em Demonstratio Math.}, 45(2):337--360, 2012.

\bibitem{Sz}
K.~Szczypkowski.
\newblock Gradient perturbations of the sum of two fractional {L}aplacians.
\newblock {\em Probab. Math. Statist.}, 32(1):41--46, 2012.

\bibitem{WW}
L.~Wang and W.~Wang.
\newblock Large-time behavior of periodic solutions to fractal {B}urgers
  equation with large initial data.
\newblock {\em Chin. Ann. Math. Ser. B}, 33(3):405--418, 2012.

\end{thebibliography}

\end{document}